\newtheorem{theorem}{Theorem}
\newtheorem{lemma}{Lemma}
\def\cast{{
   \mathord{
      \hbox to 0em{
         \ooalign{
	   \smash{\hbox{$\ast$}}\crcr
	   \smash{\hskip-1pt\Large\hbox{$\circ$}} }
	 \hidewidth}
      \phantom{\bigcirc}
} }}
\newcommand{\bds}{\begin {itemize}}
\newcommand{\eds}{\end {itemize}}
\newcommand{\bdf}{\begin{definition}}
\newcommand{\blm}{\begin{lemma}}
\newcommand{\edf}{\end{definition}}
\newcommand{\elm}{\end{lemma}}
\newcommand{\bthm}{\begin{theorem}}
\newcommand{\ethm}{\end{theorem}}
\newcommand{\bprp}{\begin{prop}}
\newcommand{\eprp}{\end{prop}}
\newcommand{\bcl}{\begin{claim}}
\newcommand{\ecl}{\end{claim}}
\newcommand{\bcr}{\begin{coro}}
\newcommand{\ecr}{\end{coro}}
\newcommand{\bquest}{\begin{question}}
\newcommand{\equest}{\end{question}}
\newcommand{\larrow}{{\larrow}}
\newcommand{\argmin}{\ensuremath{\mathrm{arg}\min}}
\newcommand{\argmax}{\ensuremath{\mathrm{arg}\max}}
\newcommand{\cC}{{\ensuremath{\mathcal{C}}}}
\newcommand{\cO}{{\ensuremath{\mathcal{O}}}}
\newcommand{\va}{{\ensuremath{{\mathbf{a}}}}}
\newcommand{\vs}{{\ensuremath{{\mathbf{s}}}}}
\newcommand{\vx}{{\ensuremath{{\mathbf{x}}}}}
\newcommand{\vy}{{\ensuremath{{\mathbf{y}}}}}
\newcommand{\vz}{{\ensuremath{{\mathbf{z}}}}}
\newcommand{\mX}{{\ensuremath{\mathbf{X}}}}
\def\IC{\mathbb C}
\def\IN{\mathbb N}
\def\IZ{\mathbb Z}
\def\IR{\mathbb R}
\def\shat{^{\mathchoice{}{}%
 {\,\,\smash{\hbox{\lower4pt\hbox{$\widehat{\null}$}}}}%
 {\,\smash{\hbox{\lower3pt\hbox{$\hat{\null}$}}}}}}
\def\bSigma{{
      \ooalign{
      \smash{\hskip.4pt\raise.4pt\hbox{$\Sigma$}}\vphantom{}\crcr
      \smash{\hskip.7pt\raise.6pt\hbox{$\Sigma$}}\vphantom{}\crcr
      \smash{\hbox{$\Sigma$}}\vphantom{$\Sigma$}}
      \vphantom{\hbox{$\Sigma$}}
      }}
\def\bTheta{{
      \ooalign{
      \smash{\hskip.5pt\raise.5pt\hbox{$\Theta$}}\vphantom{}\crcr
      \smash{\hskip.0pt\raise.1pt\hbox{$\Theta$}}\vphantom{}\crcr
      \smash{\hbox{$\Theta$}}\vphantom{$\Theta$}}
      \vphantom{\hbox{$\Theta$}}
      }}
\def\bDelta{{
      \ooalign{
      \smash{\hskip.4pt\raise.4pt\hbox{$\Delta$}}\vphantom{}\crcr
      \smash{\hskip.7pt\raise.6pt\hbox{$\Delta$}}\vphantom{}\crcr
      \smash{\hbox{$\Delta$}}\vphantom{$\Delta$}}
      \vphantom{\hbox{$\Delta$}}
      }}
\def\bLambda{{
      \ooalign{
      \smash{\hskip.5pt\raise.5pt\hbox{$\Lambda$}}\vphantom{}\crcr
      \smash{\hskip.0pt\raise.1pt\hbox{$\Lambda$}}\vphantom{}\crcr
      \smash{\hbox{$\Lambda$}}\vphantom{$\Lambda$}}
      \vphantom{\hbox{$\Lambda$}}
      }}
\def\bordermatrix#1{\begingroup \m@th
  \@tempdima 8.75\p@
  \setbox\z@\vbox{%
    \def\cr{\crcr\noalign{\kern2\p@\global\let\cr\endline}}%
    \ialign{$##$\hfil\kern2\p@\kern\@tempdima&\thinspace\hfil$##$\hfil
      &&\quad\hfil$##$\hfil\crcr
      \omit\strut\hfil\crcr\noalign{\kern-\baselineskip}%
      #1\crcr\omit\strut\cr}}%
  \setbox\tw@\vbox{\unvcopy\z@\global\setbox\@ne\lastbox}%
  \setbox\tw@\hbox{\unhbox\@ne\unskip\global\setbox\@ne\lastbox}%
  \setbox\tw@\hbox{$\kern\wd\@ne\kern-\@tempdima\left[\kern-\wd\@ne
    \global\setbox\@ne\vbox{\box\@ne\kern2\p@}%
    \vcenter{\kern-\ht\@ne\unvbox\z@\kern-\baselineskip}\,\right]$}%
  \null\;\vbox{\kern\ht\@ne\box\tw@}\endgroup}
\def\argmin{\mathop{\operator@font arg\,min}}
\def\argmax{\mathop{\operator@font arg\,max}}
\newcommand{\bea}{\begin{array}}
\newcommand{\ena}{\end{array}}
\newcommand{\beq}{\begin{equation}}
\newcommand{\enq}{\end{equation}}
\newcommand{\beqa}{\begin{eqnarray}}
\newcommand{\enqa}{\end{eqnarray}}
\newcommand{\beqan}{\begin{eqnarray*}}
\newcommand{\enqan}{\end{eqnarray*}}
\newcommand{\AL}{\begin{enumerate}}
\newcommand{\ALE}{\end{enumerate}}
\def\addots{\mathinner{
    \mkern1mu\raise0pt\vbox{\kern7pt\hbox{.}}
    \mkern2mu\raise4pt\hbox{.}
    \mkern2mu\raise7pt\hbox{.}
    \mkern1mu}}
\def\sddots{\mathinner{
    \mkern.8mu\raise7pt\hbox{.}
    \mkern.8mu\raise4pt\hbox{.}
    \mkern.8mu\raise0pt\vbox{\kern7pt\hbox{.}}
    \mkern1mu}}
\def\saddots{\mathinner{
    \mkern.2mu\raise0pt\vbox{\kern7pt\hbox{.}}
    \mkern.2mu\raise4pt\hbox{.}
    \mkern.2mu\raise7pt\hbox{.}
    \mkern1mu}}
\def\sqplus{\mathbin{
	{\ooalign{\hfil\raise.3ex\hbox{\scriptsize
	+}\hfil\crcr\mathhexbox274\crcr\mathhexbox275}}
	}} 
\def\sqminus{\mathbin{
	{\ooalign{\hfil\raise.3ex\hbox{\scriptsize
	--}\hfil\crcr\mathhexbox274\crcr\mathhexbox275}}
	}}
\def\IC{{
   \mathord{
      \hbox to 0em{
	 \hskip-4pt
         \ooalign{
	   \smash{\hskip1.9pt\raise2.6pt\hbox{$\scriptscriptstyle |$}}\crcr
	   \smash{\hbox{\rm\sf C}} }
	 \hidewidth}
      \phantom{\hbox{\rm\sf C}}
} }}
\def\IN{
    {\ooalign{
   \smash{\hskip2.2pt\raise1.5pt\hbox{$\scriptscriptstyle |$}}\vphantom{}\crcr
   \hbox{\sf N}
	}}
	} 
\def\IZ{
    {\ooalign{
   \smash{\hskip1.9pt\raise0pt\hbox{$\sf Z$}}\vphantom{}\crcr
   \hbox{\sf Z}
	}}
	} 
\def\IR{
    {\ooalign{
   \smash{\hskip2.2pt\raise1.5pt\hbox{$\scriptscriptstyle |$}}\vphantom{}\crcr
   \smash{\hskip2.2pt\raise3.3pt\hbox{$\scriptscriptstyle |$}}\vphantom{}\crcr
   \hbox{\sf R}
	}}
	} 
\DeclareMathAlphabet{\mathcmb}{OT1}{cmr}{b}{n}
\def\bSigma{\ensuremath{\mathcmb{\Sigma}}}
\def\bLambda{\ensuremath{\mathcmb{\Lambda}}}
\def\bTheta{\ensuremath{\mathcmb{\Theta}}}
\newcommand{\SI}{\begin{indlist}}
\newcommand{\EI}{\end{indlist}}
\newcommand{\DL}{\begin{dashlist}}
\newcommand{\DLE}{\end{dashlist}}
\def\setboxz@h{\setbox\z@\hbox}
\def\wdz@{\wd\z@}
\def\boxz@{\box\z@}
\def\underset#1#2{\binrel@{#2}%
  \binrel@@{\mathop{\kern\z@#2}\limits_{#1}}}
\def\binrel@#1{\begingroup
  \setboxz@h{\thinmuskip0mu
    \medmuskip\m@ne mu\thickmuskip\@ne mu
    \setbox\tw@\hbox{$#1\m@th$}\kern-\wd\tw@
    ${}#1{}\m@th$}%
  \edef\@tempa{\endgroup\let\noexpand\binrel@@
    \ifdim\wdz@<\z@ \mathbin
    \else\ifdim\wdz@>\z@ \mathrel
    \else \relax\fi\fi}%
  \@tempa
}
\let\binrel@@\relax%
\begin{document}

\title{Faster Rates for the Frank-Wolfe Algorithm Using  Jacobi Polynomials}

\ninept
%
\name{Robin Francis and  Sundeep Prabhakar Chepuri 
\thanks{This work is supported in part by the CISCO CNI and Pratiskha Trust Fellowships.}
}
\address{Indian Institute of Science, Bangalore, India
}

\maketitle

\begin{abstract}
The Frank Wolfe algorithm (FW) is a popular projection-free alternative for solving large-scale constrained optimization problems. However, the FW algorithm suffers from a sublinear convergence rate when minimizing a smooth convex function over a compact convex set. Thus, exploring techniques that yield a faster convergence rate becomes crucial. A classic approach to obtain faster rates is to combine previous iterates to obtain the next iterate. In this work, we extend this approach to the FW setting and show that the optimal way to combine the past iterates is using a set of orthogonal Jacobi polynomials. We also a polynomial-based acceleration technique, referred to as Jacobi polynomial accelerated FW, which combines the current iterate with the past iterate using combing weights related to the Jacobi recursion. By carefully choosing parameters of the Jacobi polynomials, we obtain a faster sublinear convergence rate. We provide numerical experiments on real datasets to demonstrate the efficacy of the proposed algorithm. 

\end{abstract}

\begin{keywords}
Acceleration methods, constrained optimization, Frank-Wolfe algorithm, Jacobi polynomials.
\end{keywords}

\maketitle

\section{Introduction} \label{sec:intro} 
Consider constrained optimization problems of the form 
\begin{equation}
\label{original problem}
    \underset{\vx}{\text{minimize}}\hspace{3mm}f(\vx) \quad \quad \text{subject to} \quad \quad \vx\in \mathcal{C},
\end{equation}
where $f: \normalfont{\textbf{dom}}(f) \rightarrow\mathbb{R}$ is a smooth convex function and the constraint set $\mathcal{C} \subseteq \normalfont{\textbf{dom}}(f)$ is compact and compact. Projected gradient descent~\cite{nestrov} is a popular technique to solve constrained optimization problems, wherein gradient descent iterates are projected onto the constraint set. The constraint set in many large-scale problems that are commonly encountered in machine learning and signal processing, such as lasso, matrix completion, regularized logistic or robust regression, has a nice structure, but the projection operation onto the constraint set can be computationally expensive. 

The Frank-Wolfe (FW) algorithm (also popularly known as the conditional gradient method)~\cite{FW1956} is a projection-free alternative to the projected gradient descent algorithm for solving constrained optimization problems. The FW algorithm avoids the projection step and instead has a direction-finding step that minimizes a local linear approximation of the objective function over the constraint set. 
The update is then computed by taking a convex combination of the previous iterate with the solution from the linear minimization step. The linear minimization step in FW is often easy to compute for $\ell_p$-norm ball or polytope constraint sets that frequently appear in machine learning and signal processing problems, thereby making FW an attractive choice.

In~\cite{revisitFW}, convergences results are provided for the FW algorithm. The FW algorithm has iteration complexity bounds that are independent of the problem size but limited by a sublinear convergence rate of $\mathcal{O}(1/k)$. With a carefully selected step size, for minimizing a strongly convex function over a strongly convex constraint set, the FW algorithm can achieve convergence rates of $\mathcal{O}(1/k^2)$ and linear convergence rate when the solution lies in the interior of the convex constraint set~\cite{garber2015faster}. The FW algorithm can even achieve linear rates under weaker conditions than strongly convex functions over polytopes by removing carefully chosen directions (referred to as away steps) that hinder the convergence rate~\cite{Awaystep}. By finding an optimal step size at each iteration, faster convergence of the FW algorithm may be obtained~\cite{garber2015faster,partan}.

For unconstrained optimization problems, Nesterov's acceleration technique~\cite{Nestrovk} and Polyak's heavy-ball momentum technqiue~\cite{POLYAK19641} provide faster rates of $\cO(1/k^2)$. Central to Polyak's heavy-ball  method~\cite{POLYAK19641} is the so-called \emph{multistep method}, which keeps track of all the previous iterates and computes successive updates by taking a weighted combination of the past iterates. Doing so yields a smoother trajectory and helps attain faster convergence rates. In a similar vein, several variants of the multistep method have been proposed for FW. In~\cite{AFW}, a momentum-guided accelerated FW (AFW) algorithm that can achieve a convergence rate of $\mathcal{O}(1/{k^2})$ for smooth convex minimization problems over $\ell_p$-norm balls, but an order of $\mathcal{O}(1/k)$ in the general setting has been proposed. The FW iterates zig-zag as we approach the optimal solution and might affect the convergence rate. In~\cite{partan}, the zig-zag trajectory is alleviated by using the concept of parallel tangents for FW, wherein a significant improvement in convergence has been reported but without any theoretical guarantees. In~\cite{weightedgrad_giannakis_icassap21}, the zig-zag phenomenon is controlled by keeping track of the past gradients and taking its weighted average with different heuristic choices for the weights.
When using multistep-based methods, finding the optimal updates and combining weights that can achieve faster convergence without much computational overhead is important.

The main aim of this work is, therefore, to determine an optimal technique independent of the problem size, to combine the previous iterates as in the multistep method, and obtain faster convergence rates. To this end, we draw inspiration from polynomial-based iteration methods for unconstrained problems~\cite{daspremont2021acceleration} and extend that to the FW setting. The main contributions of the paper are as follows.
\begin{itemize}
    \item We express the linear combination of the past FW iterates as a polynomial and show that the optimal polynomial that yields the maximum error reduction is given by a set of orthogonal Jacobi polynomials. 
    \item We then modify the FW algorithm to incorporate a Jacobi recursion, wherein the weights from the recurrence relation of the Jacobi polynomials are used to combine the past iterate with the current iterate.
    \item We show that for carefully selected parameters of the Jacobi polynomials, we obtain a faster convergence rate of $\cO(1/k^2)$ when minimizing a smooth convex function over a compact convex constraint set.
\end{itemize}

Through numerical experiments on real datasets, we demonstrate the efficacy of the proposed algorithm.

\section{Preliminaries} \label{sec:preliminaries}

We use boldface lowercase (uppercase) letters to represent vectors (respectively, matrices). We use $\langle \vx, \vy\rangle$ to denote the inner product between vectors $\vx$ and $\vy$. We denote an arbitrary norm using $\|\cdot\|$, $\ell_2$-norm using $||\cdot||_2$, and nuclear norm using $\|\cdot\|_\star$. We say that a function $f: \normalfont{\textbf{dom}}(f) \rightarrow \mathbb{R}$ is $L$ smooth over a convex set $\cC \subseteq \normalfont{\textbf{dom}}(f)$ if for all $\vx,\vy \in \cC$ it holds that
\[ \label{Lsmooth}
f(\vy) \leq  f(\vx) + \langle \nabla f(\vx), \vy-\vx\rangle + \frac{L}{2}\|\vy-\vx\|^2.
\]

\subsection{The FW method}

The \texttt{FW} algorithm~\cite{FW1956} finds an update direction based on the local linear approximation of $f$ around the current iterate $\vx_k$ by solving the linear minimization problem:
\begin{equation}
    \vs_{k} = \underset{\vs\in \mathcal{C}}{\mathrm{arg \,min}}  \hspace{2mm}f(\vx_k) + \left <\nabla f(\vx_k),\vs-\vx_k\right>.
\end{equation}
Then the update is computed by forming a convex combination of $\vs_{k}$ and $\vx_k$ as
\[
\vx_{k+1} = (1-\gamma_k)\vx_k + \gamma_k \vs_k
\]
with $\gamma_k \in [0,1]$. This ensures $\vx_{k+1} \in \cC$. Typically, a diminishing step size $\gamma_k$ is used~\cite{revisitFW}, but line search techniques to compute the optimal step size may also be used~\cite{FW1956}. The \texttt{FW} method is summarized as Algorithm~\ref{alg:FW}.

\begin{algorithm}[t]
\caption{The vanilla Frank-Wolfe method (\texttt{FW})}\label{alg:FW}
\begin{algorithmic}[1]
\State Initialize $\vx_0 \in \mathcal{C}$
\For{$k=0,1,\ldots$}
\State $\vs_{k} \gets \underset{\vs \in \mathcal{C}}{\mathrm{arg \,min}} \left <\nabla f(\vx_k),\vs\right> $ 
\State  $\vx_{k+1} \gets \vx_k + \gamma_{k}(\vs_k-\vx_k)$ 
\State $\gamma_k \gets \frac{2}{k+2}$
\EndFor
\end{algorithmic}
\end{algorithm}
The \texttt{FW} algorithm with a diminishing step size of $\gamma_k = \frac{2}{k+2}$ yields a sublinear convergence of $\cO(1/k)$ for minimizing $L$-smooth convex functions over a compact convex constraint set. Specifically, the suboptimality gap at the $k$th iterate of \texttt{FW} satisfies~\cite{revisitFW}
\[
f(\vx_k) - f(\vx^\star) \leq \frac{2LD^2}{k+2},
\]
where $D = \sup_{\vx,\vy \in \cC} \|\vx - \vy\|_2$ is the diameter of the constraint set and $\vx^\star$ is a minimizer of $f$.

\subsection{The Jacobi polynomials} \label{sec:jacobi_pre}
The Jacobi polynomials are a set of orthogonal polynomials, orthogonal with respect to (w.r.t) the weight $(1-x)^{\alpha}(1+x)^{\beta}$ in the interval $[-1,1]$, where $\alpha,\beta > -1$ are the tunable parameters. The Jacobi polynomials are defined by the second-order recurrence relation
\begin{align}
    \label{eq:Jacobi_recursion}
    J_{k+1}(\lambda) &= (a_{k}\lambda +  b_{k})J_{k}(\lambda) -  c_{k}J_{k-1}(\lambda), \quad k=1,2,\ldots
\end{align}
with $J_{0}(\lambda)=1$ and $J_{1}(\lambda) = a_{0}\lambda +b_{0}$. The recurrence weights are given by the formulas
\begin{align}
\label{eq:Jacobi_coefficients}
    a_{k} &= \frac{(\tau_k + k) (\tau_k + k + 1)}{2\tau_k(\tau_k- \beta)}\nonumber\\
    b_{k} &= \frac{(\tau_k + k)(\alpha^2-\beta^2)}{2\tau_k(\tau_k- \beta)(\tau_k + k - 1)}\nonumber\\
     c_{k} &= \frac{k(k+\beta)(\tau_k + k + 1) }{\tau_k(\tau_k- \beta)(\tau_k + k - 1)}
\end{align}
where $\tau_k = k + \alpha + \beta + 1$ and $a_{0} = \frac{\alpha+\beta+2}{2(1+\alpha)}$ and $b_{0} = \frac{\alpha-\beta}{2(1+\alpha)}.$
Furthermore, we have $a_k + b_k - c_k = 1.$

\section{The optimal polynomial} \label{sec:JFW}

A classic technique to accelerate iterative methods is the so-called \emph{multistep method}, which uses past iterates to construct the successive iterate~\cite{POLYAK19641}. In this section, we apply this technique to \texttt{FW} and find an optimal procedure to use the past iterates.

Suppose we take a linear combination of all the past iterates from Step $4$ of Algorithm \ref{alg:FW} with a fixed step size $\gamma$, we get
\begin{equation}
\label{eq:linear_comb}
    \vz_{k+1} = \sum_{i=0}^{k}\theta_i \vx_{i}= \sum_{i=0}^{k}\theta_i[(1- \gamma)\vx_{i-1} + \gamma\vs_{i-1}],
\end{equation}
where we constrain the combining coefficients such that $\sum_{i=0}^{k}\theta_i = 1$ with $\theta_i \in [0,1]$ for $i=0,1,\ldots,k$. This constraint ensures that the weighted average is a feasible point, i.e., $\vz_{k+1} \in \mathcal{C}$. 

Now an important question to ask is, can we find optimal coefficients that can possibly accelerate the FW algorithm? To answer this question, we express \eqref{eq:linear_comb} as a polynomial and then select the best polynomial that speeds up the convergence rate. To do so, let us
first unroll~\eqref{eq:linear_comb} and express it in terms of $\vx_0$ as
\begin{align*}
    \vz_{k+1} 
    &= \sum\limits_{i=0}^{k} \theta_i [(1-\gamma)^{i}\vx_0 + \gamma \sum\limits_{j=0}^{i-1} (1-\gamma)^{i-j}\vs_j].
    \end{align*}
Let us define the polynomial $p_k(1-\gamma)$ of degree $k$ as
\[
p_k(1-\gamma) = \theta_{k}(1-\gamma)^{k}+ \theta_{k-1}(1-\gamma)^{k-1} + \cdots + \theta_0,
\]
with $p_0(1-\gamma) = 1$. Then we can express $\vz_{k+1}$ as
\begin{align*}
\vz_{k+1} & = p_{k}(1-\gamma)\vx_0 + \gamma p_{k-1}(1-\gamma)\vs_0 + \cdots + \gamma p_{0}(1-\gamma)\vs_{k-1},
\end{align*}
where 
$
p_{k}(1-\gamma)+\gamma \sum_{i=1}^kp_{k-i}(1-\gamma) = 1 
$
and thus $\vz_{k+1} \in \textbf{conv}\{\vx_0,\vs_0,\cdots,\vs_{k-1}\}$. The best polynomial is the one that minimizes the distance of the $k+1$st iterate to $\vx^\star$, i.e., 
\begin{align*}
    \vz_{k+1} - \vx^\star &= p_{k}(1-\gamma)\vx_0 + \gamma p_{k-1}(1-\gamma)\vs_0  \\
    &\quad \quad \quad + \dots + \gamma p_{0}(1-\gamma) - \vx^\star \nonumber\\
      &= p_{k}(1-\gamma)(\vx_0 - \vx^\star) + \gamma p_{k-1}(\vs_0 - \vx^\star) \\
      & \quad \quad \quad +  \cdots + \gamma p_{0}(1-\gamma)(\vs_{k-1} - \vx^\star) 
\end{align*}
for each $k$, and is bounded from above as 
\begin{align}
    \|\vz_{k+1} - \vx^\star\| 
      &\leq \> |p_{k}(1-\gamma)|\,\|\vx_0 - \vx^\star\|  \nonumber\\
      &\quad \quad + |\gamma p_{k-1}(1-\gamma)|\,\|\vs_0 - \vx^\star\|
      \nonumber\\
      & \quad \quad  \,+\,  \cdots + |\gamma p_{0}(\gamma-1)|\,\|\vs_{k-1} - \vx^\star\|  \label{eq:poly_hk}
\end{align}
due to the triangle inequality. Thus minimizing this upper bound would provide updates that are as close as possible to the optimal path. 
Next, we find the best polynomial that minimizes the above bound in the following theorem.

\begin{theorem}
A $k$th degree orthogonal Jacobi polynomial is a minimizer of the upper bound in~\eqref{eq:poly_hk}.
\end{theorem}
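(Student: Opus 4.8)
The plan is to first convert the deterministic, coupled bound in \eqref{eq:poly_hk} into a single weighted quadratic objective in the polynomial $p_k$, and then to identify its minimizer through the orthogonality structure that the optimality conditions impose. First I would bound each residual distance $\|\vx_0-\vx^\star\|$ and $\|\vs_j-\vx^\star\|$ by the diameter $D=\sup_{\vx,\vy\in\cC}\|\vx-\vy\|_2$, so that the right-hand side of \eqref{eq:poly_hk} collapses to a weighted aggregate of the magnitudes $|p_m(1-\gamma)|$ of the partial-sum polynomials $p_0,\dots,p_k$ generated by the common coefficient sequence $\{\theta_i\}$. Treating the contraction variable $\lambda=1-\gamma$ as ranging over $[-1,1]$ (after an affine rescaling of the feasible step sizes) and modelling how the residual magnitudes accumulate across the run by the Jacobi weight $w(\lambda)=(1-\lambda)^{\alpha}(1+\lambda)^{\beta}$, this aggregate is majorised by the weighted functional $\int_{-1}^{1} p_k(\lambda)^2\, w(\lambda)\,d\lambda$. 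The optimization then becomes: minimize this functional over degree-$k$ polynomials subject to the feasibility normalization $\sum_{i=0}^{k}\theta_i=1$, which is exactly $p_k(1)=1$.

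The second step is to solve this value-constrained least-norm problem \emph{without} altering the normalization to a monic or leading-coefficient one. I would encode the constraint structurally by writing $p_k(\lambda)=1-(1-\lambda)\,q_{k-1}(\lambda)$ with $q_{k-1}$ a free polynomial of degree at most $k-1$; this automatically enforces $p_k(1)=1$ and lets $p_k$ sweep the entire feasible set. The problem is then an unconstrained projection, and its first-order optimality condition reads $\int_{-1}^{1} p_k(\lambda)\,r(\lambda)\,[(1-\lambda)w(\lambda)]\,d\lambda=0$ for every polynomial $r$ of degree at most $k-1$. Thus the optimal $p_k$ is orthogonal to all lower-degree polynomials with respect to the modified weight $(1-\lambda)^{\alpha+1}(1+\lambda)^{\beta}$, i.e.\ it is, up to scale, the degree-$k$ Jacobi polynomial for these parameters; the scale is pinned down by $p_k(1)=1$, which is legitimate since a Jacobi polynomial does not vanish at the endpoint $\lambda=1$. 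Because $\alpha,\beta>-1$ are free tunable parameters, this establishes that a $k$th-degree orthogonal Jacobi polynomial minimizes the bound, as claimed. The key point is that the alignment of the constraint point $\lambda=1$ with the vanishing of the factor $(1-\lambda)$ is precisely what makes the minimizer a single Jacobi polynomial rather than a kernel sum, so the statement holds as worded.

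To close the argument and connect it to the algorithm, I would invoke Favard's theorem together with the uniqueness of the orthogonal polynomials of a positive weight: the extremal family characterised above is precisely the Jacobi family, so $p_k$ obeys the three-term recurrence \eqref{eq:Jacobi_recursion} with the coefficients \eqref{eq:Jacobi_coefficients}. This is the property the accelerated iteration will exploit, since the combining weights can be read directly off the recurrence.

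The hard part will be justifying the reduction in the first step rigorously. Two issues must be handled: the absolute values and cross-terms in \eqref{eq:poly_hk} have to be controlled so that the aggregate is genuinely a quadratic form in $p_k$ and not merely bounded by one, and the Jacobi weight $w$ has to be argued to be a faithful model for the distribution of the residual magnitudes $\|\vs_j-\vx^\star\|$ and of the contraction factors over the iterations. The parameters $\alpha,\beta$ enter exactly here, and it is their subsequent optimization that is expected to convert the generic $\cO(1/k)$ behaviour of \texttt{FW} into the faster $\cO(1/k^2)$ rate.
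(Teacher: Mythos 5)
Your proposal is sound and reaches the same conclusion as the paper, but by a genuinely different extremal argument. The paper works with a weighted $L^1$ criterion: after mapping $[-1,1]$ to $[0,1]$, it poses the minimization of $\int_0^1 |p_k(1-\gamma)|\,(2-2\gamma)^\alpha(2\gamma)^\beta\,d\gamma$ and cites Nevai's extremal characterization to assert that orthogonal polynomials are the minimizers, then handles the coupling of the lower-degree partial-sum polynomials $p_0,\dots,p_{k-1}$ appearing in \eqref{eq:poly_hk} via an expansion of $p_k$ in the Jacobi basis (stated somewhat loosely as a ``linear combination of lower degree Jacobi polynomials''). You instead pose a weighted $L^2$ least-norm problem under the value constraint $p_k(1)=1$ --- which is in fact the correct formalization of the feasibility condition $\sum_i\theta_i=1$, and is consistent with the paper's own normalization, since its recurrence weights satisfy $a_k+b_k-c_k=1$, forcing $J_k(1)=1$ in \eqref{eq:Jacobi_recursion} --- and solve it exactly by the substitution $p_k(\lambda)=1-(1-\lambda)q_{k-1}(\lambda)$. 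The resulting stationarity condition makes $p_k$ orthogonal to all lower degrees with respect to $(1-\lambda)^{\alpha+1}(1+\lambda)^{\beta}$, so the minimizer is the single Jacobi polynomial with shifted parameter $(\alpha+1,\beta)$ normalized at the endpoint (legitimate since $P_k^{(\alpha+1,\beta)}(1)\neq 0$); your observation that the constraint point coinciding with the zero of the factor $(1-\lambda)$ is what yields a single polynomial rather than a Christoffel--Darboux kernel sum is exactly right, and since $(\alpha,\beta)$ are free parameters the shift is immaterial to the theorem as stated. What your route buys is a self-contained, fully rigorous solution of the variational step (no appeal to $L^1$ extremal theory, no degree-expansion argument); what the paper's route buys is working directly with the absolute values $|p_i(1-\gamma)|$ that literally appear in \eqref{eq:poly_hk}, so its objective \eqref{eq:poly_mini} is closer in form to the bound being minimized. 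Both arguments share the same un-bridged modeling step, which you candidly flag and the paper does not: the bound \eqref{eq:poly_hk} is a finite sum of polynomial values at one fixed $\gamma$, weighted by the unknown residuals $\|\vs_j-\vx^\star\|$, and replacing it by an integral over step sizes against the Jacobi weight (whether in $L^1$ or $L^2$) is a heuristic surrogate rather than a majorization that is actually proved; relative to the paper's own standard of rigor, your proposal is at least as complete.
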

\begin{proof}
Recall that the Jacobi polynomials are orthogonal in the interval $[-1,1]$. With the change of variable $\gamma = \frac{x + 1}{2}$ the Jacobi polynomials are orthogonal in the interval $[0,1]$ w.r.t. the weight $w(\gamma) = (2- 2\gamma)^\alpha (2\gamma)^\beta d\gamma$. The polynomial $p_k^\star$ that minimizes its norm w.r.t a weight function $w(\gamma)$, i.e.,
\begin{equation}
    p_k^\star \in \, \underset{p_k: p_0(1-\gamma) = 1}{\textrm{arg\,min}} \,\, \int_0^1 \,\, |p_k(1-\gamma)| \, (2- 2\gamma)^\alpha (2\gamma)^\beta d\gamma \label{eq:poly_mini}
\end{equation}
are given by the set of orthogonal polynomials~\cite{NEVAI19863}. 

Since a $k$th degree Jacobi polynomial can be expressed as a linear combination of lower degree Jacobi polynomials~\cite{jacobi_lincomb}, we further have $p_k(1-\gamma) = \sum_{i=0}^{k-1} c_i p_i(1-\gamma)$ for some combining weights $\{c_i\}$. Thus, for Jacobi polynomials, we have 
\begin{align*}
   &\underset{p_{\kappa}: p_0(1-\gamma) = 1, \kappa \leq k-1}{\text{minimize}} \,\, \int_0^1\, \sum_{i=0}^{k-1} |c_i p_i(1-\gamma)| \, (2- 2\gamma)^\alpha (2\gamma)^\beta d\gamma \nonumber\\  
    & \leq \,\underset{p_{\kappa}: p_0(1-\gamma) = 1, \kappa \leq k-1}{\text{minimize}} \sum_{i=0}^{k-1} \int_0^1\, |c_i|\,|p_i(1-\gamma)| (2- 2\gamma)^\alpha (2\gamma)^\beta d\gamma.\label{optima kth polynomial}
\end{align*}
Therefore, selecting a $k$th degree Jacobi polynomial ensures that the norm of the lower degree polynomials are also minimized. Thereby minimizing the upper bound in \eqref{eq:poly_mini} and the error $\|\vz_{k+1} - \vx^\star\|$ at the $k+1$st iteration.
\end{proof}
The above theorem suggests that combining {all the past iterates} using orthogonal Jacobi polynomials yields the best successive update. 
Based on this insight, we modify the vanilla Frank-Wolfe method in the next section. 

\section{Jacobi acceleration for Frank-Wolfe}

In this section, we present the proposed Jacobi accelerated Frank-Wolfe method (\texttt{JFW}), which is a polynomial-based technique for using the past iterates. For a convex and $L$-smooth function $f$, we show that to reach error of at most $\varepsilon$, for carefully chosen Jacobi polynomial parameters $(\alpha,\beta)$, \texttt{JFW} only needs $\mathcal{O}(1/\sqrt{\varepsilon})$ instead of $\mathcal{O}(1/{\varepsilon})$ as needed by \texttt{FW}.

\subsection{The {\normalfont{\texttt{JFW}}} algorithm}

We have seen in Section~\ref{sec:jacobi_pre} that orthogonal Jacobi polynomials follow a second-order recursion in~\eqref{eq:Jacobi_recursion} with three sequences of coefficients $\{a_k,b_k,c_k\}$ computed as in~\eqref{eq:Jacobi_coefficients}. This means that, we do not have to store all the past iterates to compute~\eqref{eq:linear_comb}, but only store the previous iterate. Specifically, the proposed {\normalfont{\texttt{JFW}}} algorithm computes an intermediate update
\[
\vy_{k+1} = \vx_k + \gamma_k (\vs_k - \vx_k),
\]
where $\vs_k$ is the \texttt{FW} direction computed as in Step~3 of Algorithm~\ref{alg:FW} with $\gamma_k = 2/(k+2)$. Then we combine the intermediate update with the past iterate using the second-order Jacobi recursion [cf.~\eqref{eq:Jacobi_recursion}]:
\begin{equation}
   \vz_{k+1} = (a_k(1-\gamma) + b_k)\vx_{k} - c_k \vx_{k-1}
   \label{eq:}
\end{equation}
with $\gamma \in [0,1]$.
Since $a_k(1-\gamma) + b_k - c_k = 1 -\gamma a_k$ for the Jacobi polynomials, $\vz_{k+1} \notin \mathcal{C}$. Therefore, we make a final correction step
\[
\vx_{k+1} = \vz_{k+1} + \gamma a_k\vx_{k}
\]
so that $\vx_{k+1} \in \cC$ is a feasible point. 
The proposed \texttt{JFW} is summarized as Algorithm~\ref{alg:JFW}.

\begin{algorithm}[t]
\caption{Jacobi accelerated Frank-Wolfe (\texttt{JFW})}\label{alg:JFW}
\begin{algorithmic}[1]
\State Initialize $\vx_0 \in \mathcal{C},$ $\alpha\geq\beta>-1$, and $\gamma$  
\For{$k=0,1,\ldots$}
\State  $\vs_{k} \gets \underset{\vs\in \mathcal{C}}{\mathrm{arg \,min}}  \hspace{2mm} \left <\nabla f(\vx_k),\vs\right>$
\Comment {\footnotesize {FW direction finding}}
\State    $\vy_{k+1}  \gets \vx_k + \gamma_{k}(\vs_{k}-\vx_k)$
\Comment {\footnotesize {FW update}}

\State     $\vz_{k+1} \gets (a_k(1-\gamma) + b_k)\vy_{k+1} - c_k \vx_k$
\Comment {\footnotesize {Jacobi recursion}}

\State $\vx_{k+1} \gets \vz_{k+1} + \gamma a_k\vx_k$
\Comment {\footnotesize {Correction step}}
\State    $\gamma_k\gets \frac{2}{k+2}$
\EndFor
\end{algorithmic}
\end{algorithm}

\begin{figure*}[ht]
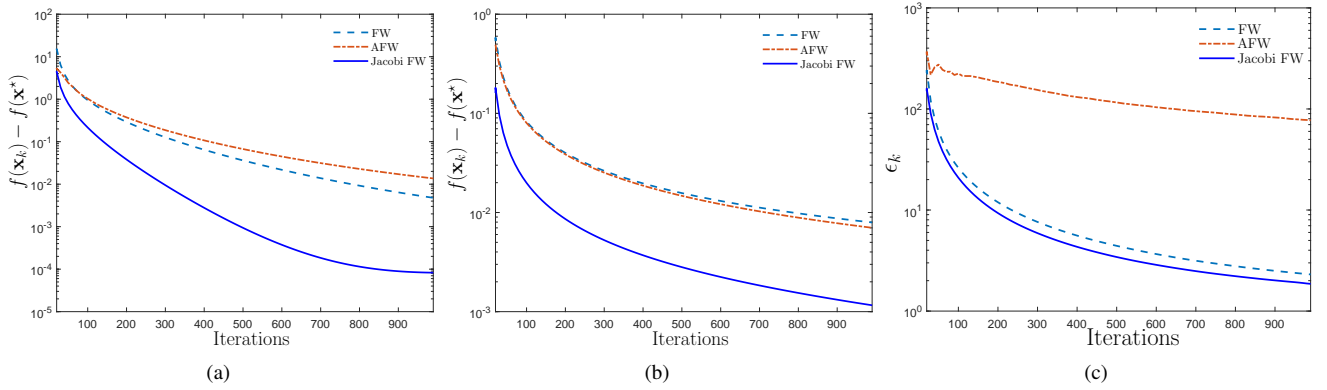

\centering
\subfloat[\label{fig:logistic}]{\includegraphics[width=0.66\columnwidth]{images/breast_cancer_L2.pdf}}
~
\subfloat[\label{fig:pima}]{\includegraphics[width=0.66\columnwidth]{images/pima_L2.pdf}}
~
\subfloat[\label{fig:matrix_comp}]{\includegraphics[width=0.66\columnwidth]{images/Matrix_Completion_Huber.pdf}}
\caption{\small Performance of the \texttt{FW}, \texttt{AFW} and \texttt{JFW} algorithms on (a) {{Breast cancer} dataset for $\ell_2$-constrained logistic regression, (b) {Pima Indian diabetes} dataset for robust ridge regression, and (c) {Movielens 100k}} dataset for robust matrix completion task.}
\label{fig:plot1}
\end{figure*}

\subsection{Convergence analysis}
To demonstrate the convergence of the Jacobi FW, we consider a $L$-smooth convex function over a compact convex constraint set. The next theorem states the improvement in convergence of the proposed algorithm.

\begin{theorem}\label{thm:cvx} 
Let $f: \normalfont{\textbf{dom}}(f) \rightarrow \mathbb{R}$ be a $L$-smooth and convex function, $\cC \subseteq \normalfont{\textbf{dom}}(f)$ be compact and convex, and $\vx^\star$ be a minimizer of $f$ over $\cC$. For appropriately chosen parameters $(\alpha,\beta,\gamma)$ with  $\alpha \geq \beta > -1$, \texttt{JFW} in Algorithm~\ref{alg:JFW} satisfies
\begin{equation}
     f(\vx_k) - f(\vx^\star) \leq \left|\frac{\alpha}{\beta}\right|\frac{4LD^2}{(k+1)(k+2)},
\end{equation}
where $D = \sup_{\vx,\vy \in \cC} \|\vx - \vy\|_2$ is the diameter of the constraint set.
\end{theorem}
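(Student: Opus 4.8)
The plan is to first collapse the three update lines of Algorithm~\ref{alg:JFW} into a single Frank--Wolfe step with a modified step size, and then run the standard smoothness-based descent argument on that effective step. First I would combine the Jacobi recursion and the correction step. Writing $\mu_k = a_k(1-\gamma)+b_k$, Line~5 gives $\vz_{k+1} = \mu_k\vy_{k+1} - c_k\vx_k$, and Line~6 adds $\gamma a_k\vx_k$; invoking the identity $a_k+b_k-c_k = 1$ from Section~\ref{sec:jacobi_pre} shows $\gamma a_k - c_k = 1-\mu_k$, so the coefficients of $\vy_{k+1}$ and $\vx_k$ sum to one and $\vx_{k+1} = \mu_k\vy_{k+1} + (1-\mu_k)\vx_k$. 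Substituting the Frank--Wolfe update $\vy_{k+1} = \vx_k + \gamma_k(\vs_k-\vx_k)$ yields $\vx_{k+1} = \vx_k + \tilde\gamma_k(\vs_k-\vx_k)$ with effective step size $\tilde\gamma_k = \mu_k\gamma_k$. This simultaneously certifies feasibility, since $\vx_{k+1}$ is a convex combination of $\vy_{k+1}\in\cC$ and $\vx_k\in\cC$ whenever $\mu_k\in[0,1]$, a condition that pins down the admissible range of $(\alpha,\beta,\gamma)$.

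Next I would write the descent inequality. Applying $L$-smoothness from~\eqref{Lsmooth} at $\vx_k$ with $\vy=\vx_{k+1}$ gives $f(\vx_{k+1}) \le f(\vx_k) + \tilde\gamma_k\langle\nabla f(\vx_k),\vs_k-\vx_k\rangle + \tfrac{L\tilde\gamma_k^2}{2}\|\vs_k-\vx_k\|^2$. Because $\vs_k$ minimizes $\langle\nabla f(\vx_k),\vs\rangle$ over $\cC$ and $\vx^\star\in\cC$, convexity yields $\langle\nabla f(\vx_k),\vs_k-\vx_k\rangle \le \langle\nabla f(\vx_k),\vx^\star-\vx_k\rangle \le f(\vx^\star)-f(\vx_k)$. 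Writing $h_k = f(\vx_k)-f(\vx^\star)$ and bounding $\|\vs_k-\vx_k\|\le D$, this collapses to the scalar recursion $h_{k+1}\le (1-\tilde\gamma_k)h_k + \tfrac{LD^2}{2}\tilde\gamma_k^2$.

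The final, and most delicate, step is to turn this recursion into the quadratic rate. Here I would exploit the polynomial viewpoint of Section~\ref{sec:JFW}: the Jacobi recursion is exactly the one whose iterate error is the degree-$k$ orthogonal Jacobi polynomial evaluated at $1-\gamma$, so by the preceding optimal-polynomial theorem and the triangle-inequality bound~\eqref{eq:poly_hk} one has $\|\vx_k-\vx^\star\| \le D\big(|p_k(1-\gamma)| + \gamma\sum_{i=0}^{k-1}|p_i(1-\gamma)|\big)$. Inserting the magnitude of the normalized degree-$k$ Jacobi polynomial at the interior point $1-\gamma$ produces a bound of order $1/k$ on $\|\vx_k-\vx^\star\|$, with the prefactor $|\alpha/\beta|$ emerging from the ratio of Jacobi normalizations (the endpoint and leading-coefficient values controlled by $\alpha$ and $\beta$). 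Converting the iterate-distance bound into a function-value bound through smoothness, $h_k\le\tfrac{L}{2}\|\vx_k-\vx^\star\|^2$, then squares the $1/k$ rate into the claimed $\tfrac{1}{(k+1)(k+2)}$ behaviour.

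The main obstacle is precisely this last conversion from an $\cO(1/k)$ iterate-distance decay to an $\cO(1/k^2)$ function-gap decay. The scalar recursion of the second step, taken alone, cannot yield $\cO(1/k^2)$, since its additive noise term $\tfrac{LD^2}{2}\tilde\gamma_k^2$ is of order $1/k^2$ and caps the attainable rate at $\cO(1/k)$ for any $\cO(1/k)$ step size, matching the known worst-case lower bound for Frank--Wolfe. The acceleration must therefore come entirely from the Jacobi polynomial magnitude bound together with the smoothness-based squaring, and the latter tacitly needs $\nabla f(\vx^\star)=0$ (an interior minimizer) for $h_k\le\tfrac{L}{2}\|\vx_k-\vx^\star\|^2$ to hold. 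Making the constant $|\alpha/\beta|$ and the admissible window of $(\alpha,\beta,\gamma)$ explicit, while controlling the partial sum $\sum_i|p_i(1-\gamma)|$, is the technical heart of the argument and is where the phrase ``appropriately chosen parameters'' does its work.
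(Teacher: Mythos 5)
Your first two steps are correct and cleanly executed: using the identity $a_k+b_k-c_k=1$ you get $\gamma a_k-c_k=1-\omega_k$, where your $\mu_k$ is exactly the paper's $\omega_k=a_k(1-\gamma)+b_k$, so Steps 4--6 of Algorithm~\ref{alg:JFW} collapse to $\vx_{k+1}=\vx_k+\omega_k\gamma_k(\vs_k-\vx_k)$; this settles feasibility for $\omega_k\in[0,1]$ and gives the standard recursion $h_{k+1}\le(1-\omega_k\gamma_k)h_k+\tfrac{LD^2}{2}\omega_k^2\gamma_k^2$ for $h_k=f(\vx_k)-f(\vx^\star)$. The genuine gap is your third step, which you yourself call the technical heart: it is only gestured at, and the mechanism you describe cannot be made to work. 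The coefficients in the bound~\eqref{eq:poly_hk} satisfy the \emph{signed} identity $p_k(1-\gamma)+\gamma\sum_{i=1}^{k}p_{k-i}(1-\gamma)=1$, so the sum of their absolute values is at least $1$; consequently the right-hand side of~\eqref{eq:poly_hk} is bounded below by the smallest of the distances $\|\vx_0-\vx^\star\|$, $\|\vs_i-\vx^\star\|$. The $\vs_i$ are outputs of the linear oracle (extreme points of $\cC$) and have no reason to approach $\vx^\star$, so no choice of polynomial, Jacobi or otherwise, can make that bound decay like $1/k$. Two further obstructions: Jacobi polynomials evaluated at a fixed interior point decay like $k^{-1/2}$, not $k^{-1}$; and the squaring step $h_k\le\tfrac{L}{2}\|\vx_k-\vx^\star\|^2$ is valid only when $\nabla f(\vx^\star)=0$, i.e., an interior minimizer, which the theorem does not assume.

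The paper's own proof takes a different route that never touches iterate distances: induction on $k$, with the base case $k=0$ following from $L$-smoothness and the diameter bound, and with the inductive step resting on a per-iteration descent property
\begin{equation*}
f(\vx_{k+1}) \le f(\vx_k) - \frac{6\omega_k LD^2}{(k+2)^2}, \qquad \omega_k\in[0,1],
\end{equation*}
which is combined with the induction hypothesis and the freedom in $(\alpha,\beta,\gamma)$ to re-establish the claimed bound at $k+1$. Your effective-step-size reduction would in fact be a natural first move toward verifying that inequality, since it makes $\omega_k$ appear exactly where the paper needs it. Finally, your observation that the scalar recursion alone caps the rate at $\cO(1/k)$ is important and correct: since your reduction shows that \texttt{JFW} \emph{is} vanilla \texttt{FW} with the damped step $\omega_k\gamma_k$, any valid proof of the $\cO(1/k^2)$ claim must extract strictly more than that recursion provides, which is precisely why your unfinished third step cannot be patched by the polynomial-magnitude argument and why the entire burden falls on the descent property that the paper asserts but does not prove in the text.
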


The complete proof is available online\footnote{ \href{https://drive.google.com/file/d/1BfP-iDiAZ0cKZ1OzJA5X1_6MGeDvq7wE/view}{\texttt{\url{https://drive.google.com/file/d/1BfP-iDiAZ0cKZ1OzJA5X1_6MGeDvq7wE/view}}}} and is omitted due to space constraints. Here, we give a brief sketch of the proof, which is based on mathematical induction. For the base case $k=0$, the bound directly follows from definitions of the diameter of the set and $L$-smoothness of the objective function. Then by the induction hypothesis, we assume that the bound holds for $k$ iterations and show that the bound is also valid for approximately chosen tuning parameters $(\alpha,\beta)$ at the $k+1$st iterate. To do so, we show that JFW satisfies the descent property
\[
f(\vx_{k+1}) \leq f(\vx_{k}) - \frac{6\omega_kLD^2}{(k+2)^2},
\]
with $\omega_k = a_k(1-\gamma) + b_k \in [0,1]$ and $L\geq0$.

Theorem~\ref{thm:cvx} affirms that we can achieve a faster sublinear convergence rate of $\cO(1/k^2)$ using a Jacobi polynomial-based acceleration technique as compared to the $\cO(1/k)$ rate of FW. However, in order to achieve this faster rate, careful tuning of the parameters of the Jacobi polynomials, $(\alpha,\beta, \gamma)$, is needed. Further, the bound on iteration complexity of \texttt{JFW} matches the bounds proposed in \cite{AFW}, but for more general convex and compact constraint sets.

\section{Numerical experiments}\label{sec:Numerical Exp}

We validate the \texttt{JFW} algorithm on real datasets  for three tasks, namely, logistic and robust regression with an $\ell_2$-norm constraint and robust matrix completion. We compare \texttt{JFW} with the vanilla \texttt{FW} in Algorithm~\ref{alg:FW} and the momentum-guided \texttt{AFW} \cite{AFW} algorithms\footnote{Software to reproduce the plots in the paper is available at~\href{https://colab.research.google.com/drive/1P1_DsP4nN2YYykWanNuasdbfQ4j6EAhB?usp=sharing}{\texttt{\url{https://colab.research.google.com/drive/1P1_DsP4nN2YYykWanNuasdbfQ4j6EAhB?usp=sharing}}}}.

\subsection{Logistic regression}
Consider the logistic regression problem with an $\ell_2$-norm constraint for binary classification:
\begin{align*} 
   &\underset{\vx}{\text{minimize}}\quad \frac{1}{m}\sum_{i=1}^{m}\log(1 + \exp{(-b_i\left<\mathbf{a}_i,\vx\right>)}) \\ 
   &\text{s. to \quad} \quad ||\vx||_2 \leq t,
\end{align*}
where $\mathbf{a}_i \in \mathbb{R}^d$ is the feature vector and $b_i$ is the binary class label. We apply \texttt{FW}, \texttt{AFW} and \texttt{JFW} on a {{breast cancer}}~\cite{breastcancerUCI} dataset with $m=698$ and $d=9$. We use $\alpha = \beta = 1.2$, $t=50$, and $\gamma = 2/3$. The suboptimality gap is shown in Fig.~\ref{fig:plot1}(a), where to compute the suboptimality gap we obtain the optimal value using {\tt{cvxpy}}~\cite{diamond2016cvxpy1}.
We can see from Fig.~\ref{fig:plot1}(a) that \texttt{JFW} has a faster rate.

\subsection{Robust regression}
Next, we consider robust ridge regression using a smooth Huber loss function with an $\ell_2$-norm constraint~\cite{owen2007robust}:
\begin{align*} 
   &\underset{\vx}{\text{minimize}}\quad \frac{1}{m}\sum_{i=1}^{m}H_{\delta}(y_i - \langle\va_i,\vx\rangle) \\ 
   &\text{s. to \quad} \quad ||\vx||_2 \leq t,
\end{align*}
where $\va_i \in \mathbb{R}^d$ and $y_i\in \mathbb{R}$. The Huber loss function is given by
\[
  H_{\delta}(c) =
  \begin{cases}
         c^2, & \text{if $|c|\leq \delta$}, \\
         2\delta|c| - \delta^2, & \text{otherwise.} 
  \end{cases}
\]

We apply robust ridge regression on {{Pima Indian diabetes}}~\cite{pimadataset} dataset with $m=767$ and $d= 8$, and the optimal function value is computed using {{scikit-learn}} \cite{scikit-learn}. We use $t=35$, $\delta=0.5$, $\alpha = \beta = 1450$, and $\gamma=0.65$. Fig.~\ref{fig:plot1}(b) shows the suboptimality gap, where we see that \texttt{JFW} has a faster convergence than \texttt{FW} and \texttt{AFW} as before.

\subsection{Robust matrix completion}
Finally, we consider the robust matrix completion problem
to predict unobserved missing entries from observations corrupted with outliers. We use the Huber loss function as before along with a low-rank promoting nuclear norm constraint for the robust matrix completion task, i.e., we solve
\begin{align*} 
   & \underset{\mX}{\text{minimize}}\,\,  \sum_{(i,j) \in \Omega} H_{\delta}(A_{ij} - X_{ij}) \\ 
   &\text{s. to \quad} \quad \|\mX\|_\star\leq R,
\end{align*}
where $\mathbf{A} \in \mathbb{R}^{m \times n}$ is the partially observed matrix with the entries ${A}_{ij}$ available for $(i,j) \in \Omega$. The sampling mask $\Omega$ is know. 

We apply robust matrix completion on the {{Movielens 100K}}~\cite{movielensUCI} dataset with $m = 1682$ movies rated by $n = 943$ users with $6.30\%$ percent ratings observed. Further, we introduce outliers by setting $4\%$ of the total ratings (i.e., $mn$ ratings) to its maximum value. We use 50\% of the available ratings for training and the remaining observations for testing. We use $R = 5$, $\alpha=\beta=4.5$, $\gamma = 2/3$ and $\delta = 4$. Fig.~\ref{fig:plot1}(c) shows normalized error, which is defined as 
\[
\epsilon_k = \frac{\sum_{(i,j) \in \Omega_{\rm test}} H_\delta(A_{ij} - X_{ij})}
{\sum_{(i,j) \in \Omega_{\rm test}} H_\delta(A_{ij})}
\]
with $\Omega_{\rm test}$ being the index set of the observations in the test set. We again observe that \texttt{JFW} performs slightly better than \texttt{FW}.

\section{Conclusions} 

The FW algorithm is a viable alternative for solving constrained optimization problems for which projection onto the constraint set is a computationally expensive operation. Motivated by polynomial-based acceleration techniques for unconstrained problems, we show that a set of orthogonal Jacobi polynomials with carefully chosen parameters can yield faster convergence rates. We have proposed Jacobi accelerated FW algorithm that provably has a faster sublinear convergence rate for minimizing a smooth convex function over a compact convex constraint set.

\pagebreak

\bibliographystyle{IEEEtran}
\bibliography{references}

\end{document}